\renewcommand*\subjclass[2][2000]{%
  \def\@subjclass{#2}%
  \@ifundefined{subjclassname@#1}{%
    \ClassWarning{\@classname}{Unknown edition (#1) of Mathematics
      Subject Classification; using '1991'.}%
  }{%
    \@xp\let\@xp\subjclassname\csname subjclassname@#1\endcsname
  }%
}
\newtheorem{theorem}{Theorem}[section]
\newtheorem{lemma}[theorem]{Lemma}
\theoremstyle{definition}
\newtheorem{remark}[theorem]{Remark}
\numberwithin{equation}{section}
\renewcommand*\subjclass[2][2000]{%
  \def\@subjclass{#2}%
  \@ifundefined{subjclassname@#1}{%
    \ClassWarning{\@classname}{Unknown edition (#1) of Mathematics
      Subject Classification; using '1991'.}%
  }{%
    \@xp\let\@xp\subjclassname\csname subjclassname@#1\endcsname
  }%
}
\def\NABLA#1{{\mathop{\nabla\kern-.5ex\lower1ex\hbox{$#1$}}}}
\def\Nabla#1{\nabla\kern-.5ex{}_{#1}}
\def\Tabla#1{\Tilde\nabla\kern-.5ex{}_{#1}}
\renewcommand{\Tilde}{\widetilde}
\begin{document}

\title{Bi-Harmonic mappings and J. C. C. Nitsche type conjecture}
\author{David Kalaj}
\address{University of Montenegro, faculty of natural sciences and mathematics,
Cetinjski put b.b. 81000, Podgorica, Montenegro}
\email{davidkalaj@gmail.com}
\author{Saminathan Ponnusamy} \address{Department of
Mathematics, Indian Institute of Technology Madras, Chennai-600 036,
India.} \email{samy@iitm.ac.in}

\subjclass {Primary 30C55, Secondary 31C05}
\keywords{Planar harmonic mappings, Planar bi-harmonic mappings,
Annuli }

\begin{abstract}
In this note it is formulated the J. C. C. Nitsche type conjecture
for bi-harmonic mappings. The conjecture has been motivated by the
radial bi-harmonic mappings between annuli.

\end{abstract}
\maketitle 
\section{Introduction}
The bi-harmonic equation of four times continuously differentiable
complex-valued functions $u$ defined in an open and connected set is
$$\Delta^2 u =\Delta (\Delta u)= 0.
$$
It is well known that, a planar harmonic mapping $u$ is bi-harmonic,
if and only if $u(z)=|z|^2g(z)+h(z)$, where $g$ and $h$ are harmonic
mappings, i.e. the mappings $w$ satisfying the Laplace equation
$\Delta w = 0$ in some subdomain $\Omega$ of the complex plane
${\mathbb C}$. Every analytic function is a harmonic mapping and
every bi-holomomorphic function is a harmonic diffeomorphism. The
set $A(1,t):=\{z: 1<|z|<t\}\subset {\mathbb C}$ is called an
annulus. It is well known the Schottky theorem which assert that two
annuli can be mapped by mean of a bi-holomorphic mapping if and only
if they have the same modulus.

J. C. C. Nitsche \cite{n} by considering the complex-valued
univalent harmonic functions
\begin{equation}\label{radial}
f(z)=\frac{ts-t^2}{(1 - t^2)}\frac 1{\bar z} +\frac{1 - t s}{1 -
t^2}z,
\end{equation}
showed that an annulus $1 < |z| < t$ can then be mapped onto any
annulus $1 < |w| < s$ with
\begin{equation}\label{nitsche}
s\geq n(t):=\frac{1+t^2} {2t}.
\end{equation}
J. C. C. Nitsche  conjectured that, condition \eqref{nitsche} is
necessary as well. The critical Nitsche map with zero initial speed
is
$$f(z)=\frac{1 + |z|^2}{2 \bar z}.
$$
This mean that this harmonic function make the maximal distortion of
rounded annuli $A(1,t)$.

Nitsche also showed that $s\geq s_0$ for some constant
$s_0=s_0(t)>1$. Thus although the annulus $1 < |z| < t$ can be
mapped harmonically onto a punctured disk, it cannot be mapped onto
any annulus that is ``too thin''. For the generalization of this
conjecture to $\Bbb R^n$ and some related results we refer to
\cite{jmaa}. For the case of hyperbolic harmonic mappings we refer
to \cite{h}. Some other generalization has been done in
\cite{israel}. The Nitsche conjecture for Euclidean harmonic
mappings is settled recently in \cite{conj} by Iwaniec, Kovalev and
Onninen, showing that, only radial harmonic mappings
$g(z)=e^{i\alpha} f(z)$, where $f$ is defined in \eqref{radial},
which inspired the Nitsche conjecture, make the extremal distortion
of rounded annuli. For some partial result toward the Nitsche
conjecture and some other generalizations we refer to the papers
\cite{ar, ar1, ar2, ako, iwa, dist, Al,weit}.

In this paper, we will state a similar conjecture with respect to
bi-harmonic mappings. In order to do this, in Section~2 we will find
all radial bi-harmonic maps between annuli. In Section~3 we will
prove some technical results concerning the radial bi-harmonic
mappings. Section~4 contains the main result which assert that, the
class of radial bi-harmonic diffeomorphisms between two annuli
$A(1,t)$ and $A(1,s)$ is nonempty if and only if $s\ge \sigma(t)$
where $\sigma(t)$ is some constant larger than $1$. It remains an
open question, whether this phenomenon remains true for the whole
class of bi-harmonic diffeomorphisms as in harmonic case.

\section{Radial solutions of bi-harmonic equation}
A mapping $f$ is called \emph{radial} if there exists a constant
$\varphi$ and a real function $g$ such that
$$f(re^{i\theta})=g(r)e^{i(\theta+\varphi)}.
$$
It is well known that, a radial solution $u$ of the harmonic
equation is given by $u(z) = A z + B/ \bar z$, where $a$ and $b$ are
two complex constants. To prove this we start by Laplacean in polar
coordinates. Let $U(r,\theta)= u(re^{i\theta})$. Then $\Delta u=0$
if and only if
$$  \Delta U := {1 \over r} {\partial \over \partial r} \left( r
{\partial U \over \partial r} \right) + {1 \over r^2} {\partial^2 U
\over \partial \theta^2}=0 .
$$

Assuming that $U(r,\theta)=p(r) e^{i\theta}$ we obtain the equation
$${1 \over r^2} (r^2p''(r)+rp'(r)- p(r))=0.
$$
By taking the change of variables $t=\log r$ and $P(t)=p(r)$ we
arrive at
$$P''(t)-P(t)=0
$$
and so, we obtain $P(t) = Ae^{t}+Be^{-t}$ and therefore $p(r) = A r+
B/r $. Thus
$$u(z)=A z+ \frac B{\bar z}.
$$
If $v$ is bi-harmonic, then the mapping $u=\Delta v$ is harmonic. If
$v$ is radial, then $u$ is radial as well. It  follows that
$$\Delta v= Az +\frac{B}{\bar z}
$$
for some real constants $A$ and $B$. Take $V(r,\theta)=v(r
e^{i\theta})$. Then we have
\begin{equation}\label{bih}
{1 \over r} {\partial \over \partial r} \left( r {\partial V \over
\partial r} \right) + {1 \over r^2} {\partial^2 V \over \partial
\theta^2}=\left(A r +\frac{B}{r}\right)e^{i\theta}.
\end{equation}
Now, we put
$$V(r,\theta)=g(r)e^{i\theta}.
$$
Then \eqref{bih} is equivalent with
$${1 \over r^2} (r^2g''(r)+rg'(r)- g(r))=A r +\frac{B}{r}.
$$
By taking again the change of variables $t=\log r$, $G(t)=g(r)$ we
arrive at the equation
$$G''(t)-G(t)= Ae^{3 t} +B e^t.
$$
Thus
$$G(t) = de^{-t}+a e^t + b t e^t +c e^{3t}, \ \  a,b,c,d\in{\mathbb R}
$$
and therefore
\begin{equation}\label{gq}
g(r) = \frac dr + a r + br \log r + c r^3.
\end{equation}
It follows that, every radial solution of bi-harmonic equation has
the form:
$$f(z)=\frac{d}{\bar z}+ a z + bz \log|z| + c |z|^2 z.
$$

\section{The technical lemmas}
\begin{remark}
Throughout the paper we will assume that the bi-harmonic mapping
$f(re^{it})=g(r)e^{it}:A(1,t)\to A(1,s)$ maps the inner boundary
onto the inner boundary of corresponding annulus, i.e. $g$ defined
in \eqref{gq} is increasing. A similar analysis works for the case
when $g$ is a decreasing function. A radial harmonic mapping $f$,
will be called \emph{homogeneous} if the initial and final speeds
are equal to zero, i.e. if $g'(1)=g'(t)=0$
\end{remark}
\begin{lemma}\label{abuv}
Assume that $t>1$ and $s>1$. If the function $g$ defined by
\eqref{gq} satisfies $g(1)=1$, $g(t)=s$, $g'(1)=x>0$, $g'(t)=y>0$,
then there exists a function $h$, such that $h(1)=1$, $h(t)=s$,
$h'(1)=0$, $h'(t)=0$,
$$g(r)=A(r)+B(r) s +  U(r)x+V(r)y,
$$
and
$$h(r)=A(r)+B(r) s
$$
where
\begin{eqnarray*}
A &=&\displaystyle  \frac{(3 + r^2) (r^2 - t^2) ( t^2-1) +
  2 r^2 (3 - 2 t^2 - t^4) \log r +
  2 (r^4 + t^4 + r^2 (-3 + t^4)) \log t}{
 4 r (-1 + t^2) (1 - t^2 + (1 + t^2) \log t)},\\
B &=&\displaystyle \frac{2 r^2 (-1 - 2 t^2 + 3 t^4) \log r - (-1 +
r^2) ((-1 + t^2) (r^2 + 3 t^2) +
     2 (-1 + r^2) t^2 \log t)}{
 4 r t (-1 + t^2) (1 - t^2 + (1 + t^2) \log t)},\\
U &=&\displaystyle \frac{-2 r^2 (-1 + t^2)^2 \log r + (-1 + r^2)
((r^2 - t^2) (-1 + t^2) - 2 (r^2 - t^4)
    \log t)}{4 r (-1 + t^2) (1 - t^2 + (1 + t^2) \log t)},
\end{eqnarray*}
and
$$V = \frac{-2 r^2 (-1 + t^2)^2 \log r + (-1 + r^2) (-(r^2 - t^2) (-1 + t^2) +
     2 (-1 + r^2) t^2 \log t)}{
 4 r (-1 + t^2) (1 - t^2 + (1 + t^2) \log t)}.
 $$
\end{lemma}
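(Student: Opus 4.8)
The argument is a direct linear-algebra computation; here is the plan. By \eqref{gq} the radial profile and its derivative are
\[
g(r)=\frac dr+ar+br\log r+cr^3,\qquad g'(r)=-\frac d{r^2}+a+b(\log r+1)+3cr^2 .
\]
First I would impose the four conditions $g(1)=1$, $g'(1)=x$, $g(t)=s$, $g'(t)=y$; since $\log1=0$ this is the linear system
\[
M\begin{pmatrix} d\\ a\\ b\\ c\end{pmatrix}=\begin{pmatrix} 1\\ x\\ s\\ y\end{pmatrix},
\qquad
M=\begin{pmatrix}
1 & 1 & 0 & 1\\
-1 & 1 & 1 & 3\\
\tfrac1t & t & t\log t & t^3\\
-\tfrac1{t^2} & 1 & \log t+1 & 3t^2
\end{pmatrix}.
\]
Because the data $(1,x,s,y)$ enter linearly, once $M$ is invertible the solution $(d,a,b,c)$ depends linearly on $(1,s,x,y)$, and hence so does $g(r)$ for each fixed $r$; collecting the coefficients of $1$, $s$, $x$ and $y$ produces functions $A(r)$, $B(r)$, $U(r)$, $V(r)$, each of which is again of the form \eqref{gq}, with $g=A+Bs+Ux+Vy$, and it then remains only to match these against the displayed formulas.

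The key step is therefore the invertibility of $M$ for $t>1$. Expanding along the first row and simplifying, I would show
\[
\det M=\frac{4(t^{2}-1)}{t}\,\psi(t),\qquad \psi(t):=1-t^{2}+(1+t^{2})\log t ,
\]
which is exactly the quantity occurring in the common denominators of $A,B,U,V$. For $t>1$ we have $t^{2}-1>0$, and $\psi(t)>0$ because $\psi(1)=\psi'(1)=\psi''(1)=0$ while $\psi'''(t)=2t^{-3}+2t^{-1}>0$ on $(0,\infty)$, so that $\psi''$, then $\psi'$, then $\psi$ are successively positive on $(1,\infty)$. Hence $\det M\neq0$ and the system has a unique solution.

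Finally I would solve the system explicitly, e.g.\ by Cramer's rule, so that each of $d,a,b,c$ is the ratio of an explicit $4\times4$ determinant (linear in $1,s,x,y$) to $\det M$, substitute back into $g(r)=d/r+ar+br\log r+cr^3$, and group the terms multiplying $1$, $s$, $x$ and $y$; this reproduces the functions $A$, $B$, $U$, $V$ of the statement. Taking $x=y=0$ in the decomposition gives $h:=A+Bs$, which by construction is the profile \eqref{gq} with boundary data $g(1)=1$, $g(t)=s$, $g'(1)=0$, $g'(t)=0$, so that $h(1)=1$, $h(t)=s$ and $h'(1)=h'(t)=0$, as asserted. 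The only conceptual point in the whole argument is the non-vanishing of $\det M$, i.e.\ the positivity of $\psi$ on $(1,\infty)$; the main obstacle in practice is simply the volume of algebra needed to invert the explicit $4\times4$ matrix and simplify the four resulting rational functions of $t$ --- tedious, but entirely mechanical.
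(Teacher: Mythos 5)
Your proposal is correct and is precisely the ``lengthy but straightforward'' computation that the paper omits: solve the $4\times4$ linear system for $(d,a,b,c)$ in the profile $g(r)=d/r+ar+br\log r+cr^3$ with data $(1,x,s,y)$ and collect the coefficients of $1,s,x,y$ to get $A,B,U,V$, with $h=A+Bs$ obtained by setting $x=y=0$. Your determinant identity $\det M=\frac{4(t^2-1)}{t}\bigl(1-t^2+(1+t^2)\log t\bigr)$ checks out, and the positivity of $1-t^2+(1+t^2)\log t$ for $t>1$ (your successive-derivative argument) is exactly the paper's Lemma~\ref{1}\textbf{(e)}, so nothing essential is missing beyond the mechanical simplification you acknowledge.
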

\begin{proof}
The proof is length but straightforward and is therefore omitted.
\end{proof}

\begin{lemma}\label{lem-3.3}
Under the conditions and notation of Lemma {\rm \ref{abuv}} there
hold the following relations
\begin{equation}\label{bplus}
B'(r)>0, \ \ \ \text{for}\ \  \ 1<r<t<\infty,
\end{equation}

\begin{equation}\label{aplus}
-A'(r)>0, \ \ \ \text{for}\ \  \ 1<r<t<\infty,
\end{equation}

\begin{equation}\label{apa}
A'(t^-)=B'(t^-)=0, \ \ \ \text{for}\ \  \ 1<t<\infty,
\end{equation}

\begin{equation}\label{us}
-\frac{d}{dr}\frac{A'(r)}{B'(r)}>0, \ \ \ \text{for}\ \  \
1<r<t<\infty,
\end{equation}

\begin{equation}\label{lus}
-\frac{d}{dr}\frac{U'(r)}{B'(r)}>0, \ \ \ \text{for}\ \  \
1<r<t<\infty,
\end{equation}

\begin{equation}\label{luso}
\frac{d}{dr}\frac{V'(r)}{B'(r)}>0, \ \ \ \text{for}\ \  \
1<r<t<\infty,
\end{equation}
\begin{equation}\label{do}
\lim_{r\to t^-}\frac{-A'(r)}{B'(r)}=\frac{t (3 - 4 t^2 + t^4 + 4 t^2
\log t)}{2 - 2 t^2 + \log t +
 3 t^4\log t},
 \end{equation}

 \begin{equation}\label{re}
 \lim_{r\to t^-}\frac{-U'(r)}{B'(r)}=\frac{t (-1 + t^4 - 4 t^2 \log t)}{2 - 2 t^2 + \log t + 3 t^4
 \log t},\end{equation}

\begin{equation}\label{mi}
\lim_{r\to t^-}\frac{-V'(r)}{B'(r)}=-\infty,
\end{equation}

\begin{equation}\label{fa}
\lim_{r\to 1^+}\frac{-A'(r)}{B'(r)}=-\frac{t (-2 t^2 (-1 + t^2) + (3
+ t^4) \log t)}{ 1 - 4 t^2 + 3 t^4 - 4 t^2 \log t},
\end{equation}

\begin{equation}\label{so}
\lim_{r\to 1^+}\frac{-U'(r)}{B'(r)}=-\infty
\end{equation}
and
\begin{equation}\label{la}
\lim_{r\to 1^+}\frac{-V'(r)}{B'(r)}=\frac{t (-1 + t^4 - 4 t^2 \log
t)}{1 - 4 t^2 + 3 t^4 - 4 t^2 \log t}
\end{equation}
\end{lemma}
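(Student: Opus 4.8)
The plan is to treat the lemma as a (sizeable) exercise in one-variable calculus: extract from Lemma~\ref{abuv} the coefficients of $A,B,U,V$ in the basis of radial bi-harmonic profiles, differentiate, clear the positive common denominators, and obtain each assertion from a factorization together with a monotonicity argument in $r$ with $t$ held as a parameter.

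A useful preliminary is that $D(t):=1-t^{2}+(1+t^{2})\log t>0$ for all $t>1$: since $D(1)=D'(1)=D''(1)=0$ and $D'''(t)=2/t+2/t^{3}>0$, successively $D''$, $D'$, $D$ are positive on $(1,\infty)$. Hence the $r$-independent factors $4(t^{2}-1)D(t)$ and $4t(t^{2}-1)D(t)$ in the denominators of $A,U,V$ and of $B$ are strictly positive, and it suffices throughout to control the signs of numerators. Next, rewriting the formulas of Lemma~\ref{abuv} shows each of $A,B,U,V$ has the shape $w(r)=d_{w}/r+a_{w}r+b_{w}r\log r+c_{w}r^{3}$ of \eqref{gq}, with $a_{w},b_{w},c_{w},d_{w}$ explicit in $t$; then $w'=-d_{w}/r^{2}+a_{w}+b_{w}(1+\log r)+3c_{w}r^{2}$ and $w''=2d_{w}/r^{3}+b_{w}/r+6c_{w}r$, so all first and second derivatives are available in closed form. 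Substituting $r=t$ (equivalently, matching $g(1)=1$, $g(t)=s$, $g'(1)=x$, $g'(t)=y$ in Lemma~\ref{abuv} against all admissible $s,x,y$) gives \eqref{apa}, namely $A'(1)=B'(1)=A'(t)=B'(t)=0$, together with $U'(1)=1$, $U'(t)=0$, $V'(1)=0$, $V'(t)=1$, which are used below.

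For \eqref{bplus} and \eqref{aplus} I would study $r^{2}B'(r)$ and $-r^{2}A'(r)$: each has the form $\alpha+\beta r^{2}+\gamma r^{4}+\delta r^{2}\log r$ with constants depending on $t$, vanishes at $r=1$ and $r=t$ by \eqref{apa}, and after differentiating twice in $r$ (which removes the logarithm) has a sign fixed by the signs of the coefficients $b_{w},c_{w}$; hence $r^{2}B'$, and so $B'$, keeps one sign strictly between its two zeros, and likewise $-A'$. For the limits \eqref{do}, \eqref{re}, \eqref{fa}, \eqref{la}, numerator and denominator both vanish at the relevant endpoint by \eqref{apa}, so one application of L'Hospital's rule replaces $-A'/B'$ by $-A''/B''$, etc., and the displayed expressions come out after inserting the explicit second derivatives; the denominators $2-2t^{2}+\log t+3t^{4}\log t$ and $1-4t^{2}+3t^{4}-4t^{2}\log t$ are, up to positive constants, $B''(t)$ and $B''(1)$, and one checks each is nonzero for $t>1$. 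For \eqref{mi} the numerator tends to $-V'(t)=-1$ and for \eqref{so} it tends to $-U'(1)=-1$, while in both cases $B'(r)\to 0^{+}$ by \eqref{bplus}; hence both limits equal $-\infty$.

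The substantive part is the monotonicity of the three quotients \eqref{us}, \eqref{lus}, \eqref{luso}. Since $B'(r)>0$ on $(1,t)$, the sign of $-\frac{d}{dr}\frac{A'}{B'}=\frac{A'B''-A''B'}{(B')^{2}}$ equals that of $W_{AB}:=A'B''-A''B'$; likewise $W_{UB}:=U'B''-U''B'$ controls \eqref{lus} and $W_{VB}:=V'B''-V''B'$ controls \eqref{luso}, the required signs on $(1,t)$ being $W_{AB}>0$, $W_{UB}>0$, $W_{VB}<0$. Using the basis expansions, a direct computation shows each of these equals $\bigl(P+Qr^{2}+Rr^{4}+(S+Tr^{4})\log r\bigr)/r^{3}$ with $P,Q,R,S,T$ explicit constants (involving $\log t$) built from the coefficients of the two functions; each $W$ vanishes to second order at whichever of $r=1$, $r=t$ makes both underlying first derivatives vanish, so one pulls out the forced factors of $(r-1)$ and $(r-t)$ and is reduced to showing the remaining cofactor has no zero in $(1,t)$, uniformly in $t>1$. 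I expect this last step to be the main obstacle: it is carried out by differentiating enough times in $r$ to eliminate $\log r$, combined with endpoint evaluations and the known signs of the structure constants, but these are the bulkiest expressions in the paper and the bookkeeping is delicate.
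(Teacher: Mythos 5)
Your setup is sound: the positivity of $1-t^{2}+(1+t^{2})\log t$, the observation that $A,B,U,V$ are radial bi-harmonic profiles $d_w/r+a_wr+b_wr\log r+c_wr^{3}$ with the interpolation data $A'(1)=B'(1)=A'(t)=B'(t)=0$, $U'(1)=V'(t)=1$, $U'(t)=V'(1)=0$, and the treatment of the limits \eqref{do}--\eqref{la} by one application of l'Hospital (or by $-1/0^{+}$ for \eqref{mi} and \eqref{so}) all match what the paper does or would need. The problem is that the heart of the lemma --- \eqref{bplus}, \eqref{aplus} and above all the monotonicity statements \eqref{us}, \eqref{lus}, \eqref{luso} --- is not actually proved. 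You reduce \eqref{us} to showing that the Wronskian-type numerator $A'B''-A''B'$ has no zero in $(1,t)$ and then write that this ``last step is the main obstacle \dots the bookkeeping is delicate''; that last step \emph{is} the content of the lemma. In the paper it is carried by Lemma~\ref{1}: parts (a), (b) give the sign of the numerators of $B'$ and $A'$, and parts (d), (e) give the sign of the numerator of $\frac{d}{dr}\bigl(-A'/B'\bigr)$, which the paper computes in closed form. The proof of Lemma~\ref{1} rests on a specific idea you do not have: substitute $\alpha=r^{2}$, $\beta=t^{2}$ and differentiate repeatedly in $\beta$ (the $t$-variable, not $r$), exploiting that the expression and its first derivatives vanish at $\beta=\alpha$ while the top derivative has an evident sign.

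Moreover, the mechanism you propose for \eqref{bplus} and \eqref{aplus} does not work as stated. Writing $\phi(r)=r^{2}B'(r)=-d_B+(a_B+b_B)r^{2}+b_Br^{2}\log r+3c_Br^{4}$, two differentiations in $r$ do \emph{not} remove the logarithm ($\phi''=2(a_B+b_B)+b_B(2\log r+3)+36c_Br^{2}$; only a third derivative does), and the sign of $\phi''$ is not ``fixed by the signs of the coefficients'': one checks from Lemma~\ref{abuv} that $b_B>0$ while $c_B<0$, so the $r^{2}\log r$ and $r^{4}$ terms compete and no immediate convexity/concavity conclusion between the two zeros $r=1$ and $r=t$ follows. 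A genuinely workable route is exactly the paper's: clear the (positive) denominators, reduce each claim to a polynomial--logarithmic inequality in $(r^{2},t^{2})$, and prove it by the $\beta$-differentiation scheme of Lemma~\ref{1} with endpoint vanishing at $\beta=\alpha$. Until \eqref{bplus}, \eqref{aplus} and \eqref{us}--\eqref{luso} are established by such an argument, the proof is incomplete. (One small further caution: the quotient-derivative and $U'$ formulas involve delicate signs --- the paper's displayed $U'$ even has a sign slip in its $\log r$ term relative to $U$ of Lemma~\ref{abuv} --- so your planned ``endpoint evaluations'' must be done from the interpolation data, as you in fact do, rather than from the displayed formulas.)
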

The proof of Lemma~\ref{abuv} lies on the following lemma.
\begin{lemma}\label{1}
For all $1< t$, $1< r< t$,
\begin{enumerate}
\item[\textbf{(a)}]
$2 r^2 (-1 - 2 t^2 + 3 t^4) \log r +(1 - r^2) (3 (r^2 - t^2) (-1 +
t^2) +
    2 (1 + 3 r^2) t^2 \log t)>0,$

\item[\textbf{(b)}] $-2 r^2 (-3 + 2 t^2 + t^4) \log r + (r^2-1) (3 (r^2 - t^2) (-1 +
t^2) + 2 (3 r^2 + t^4)
   \log t)<0,$

\item[\textbf{(c)}] $(-1 +
     t^2) (3 (-1 + r^2) (-r^2 + t^2) + 2 r^2 (1 + 3 t^2) \log r) +
  2 (1 + 2 r^2 - 3 r^4) t^2 \log t
> 0,$

\item[\textbf{(d)}] $2 (r^4 - t^2) (t^2-1 ) \log r + (1 - r^2) ((r^2 - t^2) (-1 +
t^2) +      2 (r^2-1) t^2 \log t)> 0$
\item[\textbf{(e)}] $1 - t^2 + (1 + t^2) \log t> 0.$
\end{enumerate}
\end{lemma}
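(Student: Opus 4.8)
These five items are all single-variable inequalities in disguise, and I would dispose of (e) first and then handle (a)--(d) by a common scheme.

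\textbf{Inequality (e).} Set $\phi(t)=1-t^{2}+(1+t^{2})\log t$. Then $\phi(1)=0$, and successive differentiation gives $\phi'(t)=-t+2t\log t+t^{-1}$ with $\phi'(1)=0$, then $\phi''(t)=1+2\log t-t^{-2}$ with $\phi''(1)=0$, and finally $\phi'''(t)=2t^{-1}+2t^{-3}>0$ for all $t>0$. Integrating this chain upward from $t=1$ yields $\phi''>0$, hence $\phi'>0$, hence $\phi>0$ on $(1,\infty)$. This is exactly the inequality that makes the denominator $4r(\pm1)(t^{2}-1)\bigl(1-t^{2}+(1+t^{2})\log t\bigr)$ occurring throughout Lemma~\ref{abuv} nonzero, which is why it is bundled with the rest.

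\textbf{Set-up for (a)--(d).} Fix $t>1$ and read each left-hand side as a function $F(r)$ on $[1,t]$. The starting observation is that $F(1)=F(t)=0$ in every case: at $r=1$ the factor $1-r^{2}$ kills the polynomial part while $\log 1=0$ kills the logarithmic part, and at $r=t$ one checks by direct substitution that the coefficient of $\log t$ telescopes away (with $t^{2}-1$ as the recurring factor). In case (d) one moreover finds $F'(1)=F'(t)=0$, so $r=1$ and $r=t$ are \emph{double} zeros; this reflects the boundary normalisations behind Lemma~\ref{abuv} ($A(1)=1$, $A(t)=0$, $B(1)=0$, $B(t)=1$, $U(1)=U(t)=V(1)=V(t)=0$ together with $A'(1)=A'(t)=B'(1)=B'(t)=0$, cf.\ \eqref{apa}), since (a)--(d) are, up to one differentiation and the positive denominator above, the numerators of $B'$, $-A'$, $U'$, $-V'$; correspondingly the one-sided derivatives of $F$ at $r=1,t$ used below are precisely the quantities entering the limits \eqref{do}--\eqref{la}.

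\textbf{The zero-counting argument.} Substituting $r=e^{x}$ with $x\in(0,L)$, $L=\log t$, and dividing by a suitable power of $r$ where convenient, turns $F$ into a real exponential polynomial, i.e.\ a linear combination of functions $x^{j}e^{\lambda x}$ with $\lambda\in\{0,\pm2,4\}$; for instance (d) becomes $(2ax+c_{1})e^{4x}+c_{2}e^{2x}+(c_{3}-2at^{2}x)$ with $a=t^{2}-1$. A nonzero exponential polynomial with total parameter count $m$ has at most $m-1$ real zeros counted with multiplicity (a Rolle/P\'olya induction). For (d) this count is $m=5$, while the two endpoints already contribute four zeros (double at each), so $F$ has \emph{no} zero in $(1,t)$; its sign is then fixed by the second-derivative test at $r=1$, namely $\tilde F''(0)=F''(1)=4\bigl(t^{4}-1-4t^{2}\log t\bigr)>0$, where $\tilde F(x)=F(e^{x})$ and the last inequality is itself of type (e), its derivative being $4t\,(t^{2}-1-2\log t)$ with $t^{2}-1-2\log t>0$ by the one-line device above. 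Hence $F>0$ on $(1,t)$, which is (d). For (a), (b), (c) the bare count only gives ``at most three zeros'', leaving room for one extra interior zero; here I would differentiate once in $x$ and factor out $e^{2x}$, leaving an exponential polynomial $g$ with at most two zeros, whose endpoint values are positive multiples of $F'(1)$ and of $F'(t)$. Since $F'(1)>0>F'(t)$ for (a) --- explicitly $F'(1)=4\bigl(3t^{4}-4t^{2}+1-4t^{2}\log t\bigr)$ and $F'(t)=-4t\bigl((3t^{4}+1)\log t-2(t^{2}-1)\bigr)$, each signed by the iterated ``vanishes at $t=1$, positive derivative'' trick --- the function $g$, and therefore $F'$, changes sign exactly once on $(1,t)$; as $F$ vanishes at both endpoints, it keeps the claimed sign on the interior. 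For (b) the signs are simply reversed, matching the conclusion $F<0$.

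\textbf{The main obstacle.} There is no conceptual difficulty; the cost is purely organisational. Each of (a)--(d) has to be brought to its own exponential-polynomial normal form, one must verify the endpoint vanishing and (for (d)) the endpoint double zeros, choose the power of $r$ to divide by so that the differentiated factor lands in the smallest possible span, and then dispatch a handful of auxiliary one-variable inequalities comparing $\log t$ with rational functions of $t$ --- each of which, after two or three differentiations, reduces to something visibly positive. Keeping this bookkeeping straight, rather than any single estimate, is the real work, which is presumably also why the companion Lemma~\ref{abuv} was stated with its (entirely computational) proof omitted.
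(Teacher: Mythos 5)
Your scheme for (e), (a), (c) and (d) is sound and genuinely different from the paper's: the paper substitutes $\alpha=r^{2}$, $\beta=t^{2}$ and proves (d) by iterated differentiation in $\beta$ (showing $h'''\ge 0$ and climbing back down from the base point $\beta=\alpha$), declaring (a)--(c), (e) ``similar''; you instead fix $t$, exploit the endpoint zeros and a P\'olya-type zero bound for exponential polynomials. I checked the load-bearing numbers: for (d) one indeed has $F(1)=F'(1)=F(t)=F'(t)=0$ and $F''(1)=4\,(t^{4}-1-4t^{2}\log t)>0$, and the parameter count $5$ does force the absence of interior zeros; for (a) your formulas $F'(1)=4\,(3t^{4}-4t^{2}+1-4t^{2}\log t)>0$ and $F'(t)=-4t\,\bigl((3t^{4}+1)\log t-2(t^{2}-1)\bigr)<0$ are correct, the once-differentiated function is of the form $c_{0}+c_{1}x+c_{2}e^{2x}$ with at most two real zeros, and (c) turns out to have exactly the same endpoint derivatives as (a), so those cases close.

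The gap is (b). The sentence ``for (b) the signs are simply reversed'' is false: writing $F$ for the left-hand side of (b), the same endpoint computation gives $F'(1)=4\,(t^{4}+3)\log t-8t^{2}(t^{2}-1)$, which is \emph{positive} as soon as $\log t\ge 2$ (at $t=e^{2}$ it equals $8t^{2}+24$). Since $F(1)=0$, this makes $F>0$ immediately to the right of $r=1$, so your sign-change argument cannot be run --- and in fact the inequality (b) itself fails there: numerically, at $t=e^{2}$, $r=1.01$ the left-hand side is about $+4$, and at $t=10$, $r=1.1$ it is about $+1.2\times 10^{3}$. No amount of bookkeeping will close this, because the statement is false for large $t$; it holds only in a restricted range of $t$ (e.g.\ it is fine at $t=2$, but the endpoint derivative changes sign near $t\approx 7$). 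Note that this is not only your problem: the paper omits the proof of (b) as ``similar to (d)'', yet its consequence \eqref{aplus}, i.e.\ $A'(r)<0$ on $(1,t)$, also fails for large $t$ --- solving the boundary-value problem defining $A$ at $t=10$ gives $A(1.5)\approx 1.01>1=A(1)$. So your proposal is correct for (a), (c), (d), (e), but (b) needs either a restriction on $t$ or a corrected statement before any proof strategy, yours or the paper's, can apply.
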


\begin{proof}[Proof of Lemma~{\rm \ref{1}}]
By taking the substitution $\alpha=r^2$, $\beta=t^2$, the inequality
\textbf{(d)} o f the lemma is equivalent with the inequality
$$h(\beta):=(\alpha^2 - \beta) (-1 + \beta) \log
\alpha + (\alpha - 1) ((\beta - \alpha) (\beta - 1) + (1 - \alpha)
\beta \log \beta)\ge 0.
$$
A computation gives
\begin{eqnarray*}
h'(\beta)& =&(1 + \alpha^2 - 2 \beta) \log \alpha - (-1 + \alpha) (2
(\alpha - \beta) + (-1 + \alpha) \log \beta),\\
h''(\beta)& =& -\frac{(-1 + \alpha) (-1 + \alpha - 2 \beta)}{\beta}
- 2 \log \alpha
\end{eqnarray*}
and
$$h'''(\beta) =\frac{(-1 + \alpha)^2}{\beta^2}.
$$
It follows that $h''$ is increasing, and therefore
$$h''(\beta)\ge h''(\alpha)=-1/\alpha + \alpha - 2 \log \alpha.
$$
But
$$(-1/\alpha + \alpha - 2 \log \alpha)'=\frac{(\alpha-1)^2}{\alpha^2},
$$
and therefore $-1/\alpha + \alpha - 2 \log \alpha\ge -1+1-2 \log
1=0$. It follows that
$$h''(\beta)\ge 0.
$$
Thus, we have
$$h'(\beta)\ge h'(\alpha)=0.
$$

It follows finally that
$$h(\alpha)\ge h(\beta)=0.
$$
 The proofs of \textbf{(a)}, \textbf{(b)} and
\textbf{(c)} and \textbf{(e)} are similar to the proof of
\textbf{(d)} and are therefore omitted.
\end{proof}

\begin{proof}[Proof of Lemma~{\rm \ref{lem-3.3}}]
First of all
$$A'(r)=\frac{2 r^2 (3 - 2 t^2 - t^4) \log r + (r^2-1) (3 (r^2 - t^2) (-1 + t^2) +    2 (3 r^2 + t^4) \log t)}{4 r^2 (-1 + t^2) (1 -
   t^2 + (1 + t^2) \log t)},
$$
$$B'(r)=\frac{2 r^2 (3 t^4 - 2 t^2 -1 ) \log r  + (1 - r^2) (3 (r^2 - t^2) (-1 + t^2) +
    2 (1 + 3 r^2) t^2 \log
t)}{4 r^2 t (-1 + t^2) (1 -
   t^2 + (1 + t^2) \log t)},
$$
$$U'(r) = \frac{(1 + 3 r^2) (r^2 - t^2) (t^2-1) +
  2 r^2 (1 - t^2)^2 \log r  -
  2 (3 r^4 - t^4 - r^2 (1 + t^4)) \log
t}{
 4 r^2 (-1 + t^2) (1 - t^2 + (1 + t^2) \log t )},
$$
and
$$ V'(r) = \frac{2 r^2 (1 - t^2)^2 \log r  + (1 - r^2) ((-1 +
t^2) (3 r^2 + t^2) +
     2 (1 + 3 r^2) t^2 \log
t)}{
 4 r^2 (-1 + t^2) (1 - t^2 + (1 + t^2) \log t)}.
$$

Lemma~\ref{1}\textbf{(a)} and \textbf{(b)} imply that $A'(r)<0$ and
$B'(r)>0$. This proves the inequalities \eqref{bplus} and
\eqref{aplus}. The relation \eqref{apa} follows at once.

The derivative of the quotient function $-A'(r)/B'(r)$ is
\[\begin{split}&12 r t (-1 + t^2) (1 -
   t^2 + (1 + t^2) \log t) \\ &\times\frac{2 (r^4 - t^2) (-1 + t^2) \log r - (-1 + r^2) ((r^2 - t^2)
(-1 + t^2) +
      2 (-1 + r^2) t^2 \log t)}{(2 r^2 (1 + 2 t^2 - 3 t^4) \log r + (-1 + r^2) (3 (r^2 - t^2) (-1 + t^2) +
     2 (1 + 3 r^2) t^2 \log t))^2}.\end{split}\]
Thus, Lemma~\ref{1}\textbf{(d)} and \textbf{(e)} imply that the last
expression is positive.
Thus \eqref{us} is proved. The proofs of 
\eqref{lus} and \eqref{luso} are similar. The proofs of relations
\eqref{do}-\eqref{la} are similar to each other and follow by
l'Hopital's rule. See Figure~1 for the geometric interpretation of
\eqref{so} and \eqref{la}.
\end{proof}

\begin{figure}[htp]\label{poi}
\centering
\includegraphics{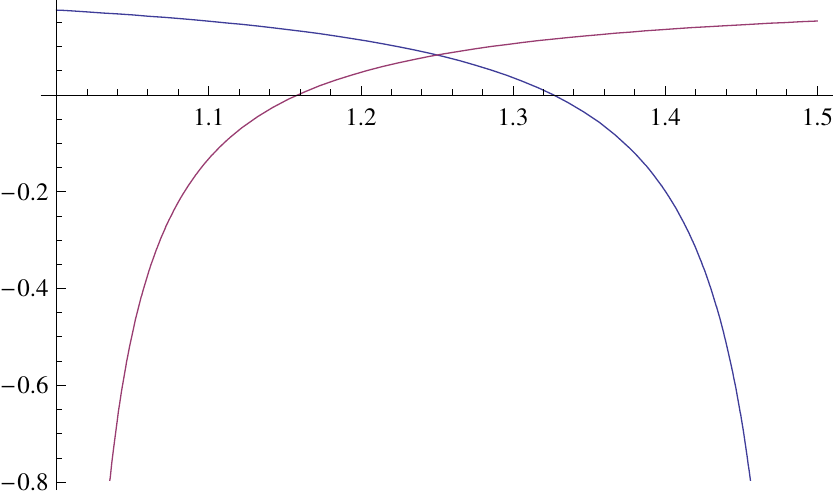}
\caption{These two curves are graphs of the functions $-U'(r)/B'(r)$
and $-V'(r)/B'(r)$ for $t=3/2$, and $1<r<t$.}
\end{figure}

\begin{lemma}\label{xyz}
For  every $t>1$, and $\tau =\frac{1+t}{2}$, we have
$$\frac{-A'(\tau)}{B'(\tau)}>1.
$$
\end{lemma}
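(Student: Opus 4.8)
The plan is to reduce the claim to a one–variable inequality and then settle it by the repeated–differentiation device already used in the proof of Lemma~\ref{1}\textbf{(d)}.

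\emph{Step 1 (reduction).} From the explicit formulas for $A'(r)$ and $B'(r)$ written down in the proof of Lemma~\ref{lem-3.3}, the two denominators coincide except for one extra factor $t$ in the denominator of $B'$; denoting the numerators by $N_A(r)$ and $N_B(r)$, one obtains
$$\frac{-A'(r)}{B'(r)}=\frac{-t\,N_A(r)}{N_B(r)},\qquad 1<r<t,$$
with $N_B(r)>0$ by Lemma~\ref{1}\textbf{(a)} and $N_A(r)<0$ by Lemma~\ref{1}\textbf{(b)}. Hence $-A'(\tau)/B'(\tau)>1$ is equivalent to $-t\,N_A(\tau)-N_B(\tau)>0$. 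Substituting $r=\tau=\tfrac{1+t}{2}$ and using the factorizations $t^{5}-3t^{4}+2t^{3}+2t^{2}-3t+1=(t-1)^{4}(t+1)$ and $4t^{3}+t^{2}-2t-3=(t-1)(4t^{2}+5t+3)$ (together with the analogous ones for the remaining coefficients), a straightforward if lengthy computation collapses everything into
$$-t\,N_A(\tau)-N_B(\tau)=\frac{(t-1)^{3}}{16}\,\Phi(t),$$
$$\Phi(t):=8(t+1)^{3}(t-1)\log\frac{1+t}{2}+3(t+3)(3t+1)(t^{2}-1)-2t(t+3)(4t^{2}+5t+3)\log t.$$
Since $(t-1)^{3}>0$ for $t>1$, it suffices to prove $\Phi(t)>0$ for $t>1$.

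\emph{Step 2 (the one–variable inequality).} One checks $\Phi(1)=0$, and in fact $\Phi'(1)=\Phi''(1)=\Phi'''(1)=\Phi^{(4)}(1)=0$; each of these intermediate derivatives still carries logarithmic terms vanishing at $t=1$ together with its rational remainder, so the check is routine. Since $\tfrac{d}{dt}\log\tfrac{1+t}{2}=\tfrac1{1+t}$ and $\tfrac{d}{dt}\log t=\tfrac1t$, after five differentiations the logarithms have disappeared and $\Phi^{(5)}$ is a rational function; clearing the denominator $t^{4}(t+1)^{2}$ one finds
$$t^{4}(t+1)^{2}\,\Phi^{(5)}(t)=108t^{4}+72t^{3}+24t^{2}+72t+108,$$
whose coefficients are all positive, so $\Phi^{(5)}(t)>0$ for every $t>0$. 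Integrating successively over $(1,t)$: from $\Phi^{(5)}>0$ and $\Phi^{(4)}(1)=0$ we get $\Phi^{(4)}>0$ on $(1,\infty)$; then $\Phi'''>0$ there; then $\Phi''>0$; then $\Phi'>0$; and finally $\Phi>0$ on $(1,\infty)$. This gives $-t\,N_A(\tau)-N_B(\tau)>0$, that is, $-A'(\tau)/B'(\tau)>1$.

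\emph{On the main obstacle.} There is no genuine analytic difficulty here: once the problem is cast in the form ``$\Phi$ vanishes to order five at $t=1$ and $\Phi^{(5)}>0$ on $(1,\infty)$'', repeated monotonicity finishes it exactly as in Lemma~\ref{1}\textbf{(d)}. The real work is algebraic bookkeeping, and the step most prone to slips is the simplification in Step~1 after setting $r=\tfrac{1+t}{2}$, where numerous factors of $t-1$ must be extracted correctly; keeping the common denominator of $A'$ and $B'$ in factored form and invoking Lemma~\ref{1}\textbf{(e)} for its positivity is what makes that reduction clean. One should also note that the inequality is tight near $t=1$ (indeed $-A'(\tau)/B'(\tau)\to 1$ as $t\to 1^{+}$), which is precisely why $\Phi$ must be shown to vanish to high order there rather than being bounded below by something cruder.
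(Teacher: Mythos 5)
Your proof is correct and takes essentially the same route as the paper: after clearing the common positive denominator (Lemma~\ref{1}\textbf{(e)}), your $\Phi$ coincides with the paper's $\varphi$ (the printed $\varphi$ appears to carry a sign slip, $(1-t^2)$ in place of $(t^2-1)$, but its stated fifth derivative $\frac{12(9+6t+2t^2+6t^3+9t^4)}{t^4(1+t)^2}$ is exactly your $\Phi^{(5)}$), and both arguments finish by the same scheme: $\varphi^{(k)}(1)=0$ for $k=0,\dots,4$ together with $\varphi^{(5)}>0$ on $(1,\infty)$.
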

\begin{proof}
Namely
$$\frac{-A'(\tau)}{B'(\tau)}>1
$$
if and only if
$$\varphi(t):=(1 - t^2) (9 + 30 t + 9 t^2 + 8 (1 + t)^2 \log \tau)-2 t (9 + 18 t + 17 t^2 + 4 t^3) \log t >0.
$$
On the other hand
$$\varphi^{(5)}(t)=\frac{12 (9 + 6 t + 2 t^2 + 6
t^3 + 9 t^4)}{t^4 (1 + t)^2}>0
$$
and  $\varphi^{(k)}(1)= 0$, for $k=0,1,2,3,4$. We therefore deduce
the following sequence of inequalities $\varphi^{(4)}(t)> 0$,
$\varphi^{(3)}(t)> 0$, $\varphi''(t)> 0$, $\varphi'(t)> 0$ and
$\varphi(t)>0$ for $t>1$.
\end{proof}

\begin{lemma}
Under the conditions and notation of Lemma~{\rm \ref{abuv}} we have
$U'(r)=V'(r)$ if and only if
$$r=\rho:= \sqrt{\frac 16 + \frac {t^2}{6} + \frac 16 \sqrt{1 + 14 t^2 + t^4}}.
$$
Moreover
\begin{equation}\label{uvb}
-U'(\rho)=-V'(\rho)>0,\ \ \ \frac{-A'(\rho)}{B'(\rho)}>1.
\end{equation}
\end{lemma}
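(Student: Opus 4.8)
The plan is to reduce the statement to explicit algebraic identities, exactly as was done for the earlier lemmas. First I would compute $U'(r)-V'(r)$ using the closed forms for $U'$ and $V'$ given in the proof of Lemma~\ref{lem-3.3}. Subtracting, the $\log r$ terms cancel (both carry the coefficient $2r^2(1-t^2)^2$) and the $\log t$ terms combine, so $U'(r)=V'(r)$ becomes a polynomial equation in $r^2$ (with $t$ a parameter) after clearing the common positive denominator $4r^2(-1+t^2)(1-t^2+(1+t^2)\log t)$, which is nonzero by Lemma~\ref{1}\textbf{(e)}. I expect this polynomial to be quadratic in $\alpha=r^2$, and solving it via the quadratic formula should produce precisely $\alpha=\tfrac16(1+t^2+\sqrt{1+14t^2+t^4})$, i.e. $r=\rho$; one has to check that the second root is negative (or $<1$) so that $\rho$ is the only admissible solution in $(1,t)$, and also that $\rho<t$, which follows since $\sqrt{1+14t^2+t^4}<\sqrt{(t^2+7)^2}=t^2+7$ forces $\alpha<\tfrac16(t^2+8+t^2)<t^2$.

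Next, to get $-U'(\rho)=-V'(\rho)>0$, I would observe that at $r=\rho$ we have $U'=V'$, so by \eqref{luso} (monotonicity of $V'/B'$) together with the boundary limit \eqref{la} — or, more directly, by combining \eqref{lus} and the sign of $B'$ from \eqref{bplus} — the common value $U'(\rho)=V'(\rho)$ must be negative. Concretely: from \eqref{so}, $-U'(r)/B'(r)\to-\infty$ as $r\to1^+$, and from \eqref{lus} the quotient $-U'(r)/B'(r)$ is strictly increasing on $(1,t)$ with limit \eqref{re} at $t^-$; so $-U'(r)/B'(r)$ starts negative. Meanwhile \eqref{luso} says $-V'(r)/B'(r)$ is strictly \emph{decreasing}, starting from the finite value \eqref{la} at $1^+$ and tending to $-\infty$ at $t^-$ by \eqref{mi}. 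Hence there is exactly one crossing point, which must be $\rho$, and there the common value $-U'(\rho)/B'(\rho)=-V'(\rho)/B'(\rho)$ lies strictly between $-\infty$ and the positive quantities at the endpoints; a short sign check (using that both endpoint limits in \eqref{re} and \eqref{la} are positive for $t>1$) shows the crossing value is positive, and since $B'(\rho)>0$ we conclude $-U'(\rho)=-V'(\rho)>0$.

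For the last inequality $-A'(\rho)/B'(\rho)>1$, the natural route is to compare $\rho$ with $\tau=\tfrac{1+t}{2}$ and invoke Lemma~\ref{xyz} together with the monotonicity \eqref{us}, which says $-A'(r)/B'(r)$ is strictly decreasing on $(1,t)$. Thus if one can show $\rho\le\tau$, then $-A'(\rho)/B'(\rho)\ge -A'(\tau)/B'(\tau)>1$ and we are done. The inequality $\rho\le\tau$ is $\tfrac16(1+t^2+\sqrt{1+14t^2+t^4})\le\bigl(\tfrac{1+t}{2}\bigr)^2$, i.e. $2(1+t^2)+2\sqrt{1+14t^2+t^4}\le 3(1+t)^2=3+6t+3t^2$, i.e. $2\sqrt{1+14t^2+t^4}\le 1+6t+t^2$; squaring (both sides positive for $t>1$) gives $4+56t^2+4t^4\le (1+6t+t^2)^2 = 1+12t+38t^2+12t^3+t^4$, which rearranges to $3t^4-12t^3+18t^2-12t+3\ge0$, i.e. $3(t-1)^4\ge0$. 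So $\rho\le\tau$ holds for all $t>1$ (with equality only at $t=1$), and the claim follows. The main obstacle is purely bookkeeping: carrying out the subtraction $U'-V'$ and the quadratic solve cleanly enough to recognize the stated value of $\rho$, and then correctly pinning down which endpoint limits are positive so that the sign of the common value $-U'(\rho)$ comes out right; none of this is deep, but it is the step most prone to algebraic error, so I would double-check the discriminant $1+14t^2+t^4$ and the reduction $3(t-1)^4\ge0$ symbolically.
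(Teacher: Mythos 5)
Your reduction of $U'=V'$ to the quadratic $-3r^4+(1+t^2)r^2+t^2=0$ is exactly the paper's first step, and that part is fine. The genuine gap is in your argument for $-U'(\rho)=-V'(\rho)>0$. From \eqref{lus}, \eqref{luso}, \eqref{so}, \eqref{mi}, \eqref{re}, \eqref{la} you know only that $-U'/B'$ increases from $-\infty$ to the positive limit \eqref{re}, while $-V'/B'$ decreases from the positive limit \eqref{la} to $-\infty$. That does give a unique crossing, which must be $\rho$ (as $B'(\rho)>0$), but it does \emph{not} determine the sign of the common value there: each monotonicity statement only bounds the crossing value from above by the endpoint limits \eqref{re} and \eqref{la}. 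Whether the crossing height is positive is equivalent to the zero of $U'$ lying to the left of the zero of $V'$, and nothing in your data decides that (an increasing function going from $-\infty$ to a positive value and a decreasing one going from a positive value to $-\infty$ can perfectly well cross at a negative height). This is precisely the nontrivial content of the lemma: the paper proves $-U'(\rho)>0$ by an explicit computation, substituting $\kappa=\rho^2$, eliminating $t$ via $t^2=\kappa(3\kappa-1)/(1+\kappa)$, and showing the resulting function $K(\kappa)$ is positive from $K'''>0$ together with $K(1)=K'(1)=K''(1)=0$. Some computation of this kind cannot be avoided; your ``short sign check'' is the missing step. (You also use, without proof, that the limits \eqref{re} and \eqref{la} are positive, which itself requires $t^4-1-4t^2\log t>0$ and the positivity of the denominators.)

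For the last claim your strategy coincides with the paper's (compare $\rho$ with $\tau=\frac{1+t}{2}$ and invoke Lemma~\ref{xyz} together with \eqref{us}), but both of your directional statements are reversed. First, \eqref{us} says $-\frac{d}{dr}\frac{A'(r)}{B'(r)}>0$, i.e. $-A'/B'$ is strictly \emph{increasing} on $(1,t)$ (this is also how it is used in the main theorem, where its supremum is attained at $t^-$). Second, your own algebra in fact shows $\rho\ge\tau$, not $\rho\le\tau$: the inequality $2\sqrt{1+14t^2+t^4}\le 1+6t+t^2$ rearranges to $3(t-1)^4\le 0$, which fails for every $t>1$, whereas $3(t-1)^4\ge 0$ is exactly the certificate that $2\sqrt{1+14t^2+t^4}\ge 1+6t+t^2$, i.e. $\rho\ge\tau$. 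The two slips cancel—since $-A'/B'$ increases and $\rho>\tau$, one does get $-A'(\rho)/B'(\rho)>-A'(\tau)/B'(\tau)>1$, which is the paper's argument—but as written both steps are false and must be flipped. A further small point in the first part: your bound $\rho^2<\frac16(2t^2+8)<t^2$ needs $t^2>2$; for $1<t\le\sqrt2$ argue instead that $-3\alpha^2+(1+t^2)\alpha+t^2$ is positive at $\alpha=1$ and negative at $\alpha=t^2$, so its positive root lies in $(1,t^2)$ and hence $1<\rho<t$.
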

\begin{proof}
As
$$U'(r)-V'(r)=\frac{-3 r^4 + t^2 + r^2 (1 + t^2)}{2 r^2 (-1 + t^2)},
$$
it follows that
$$U'(r)=V'(r)~\text {  if and only if }~ r=\rho:= \sqrt{\frac 16 + \frac {t^2}{6} + \frac 16 \sqrt{1 + 14 t^2 +
t^4}}
$$ or what is the same
$$t = \frac{\rho \sqrt{-1 + 3 \rho^2}}{\sqrt{1 + \rho^2}}.
$$ By taking the substitution $\kappa=\rho^2$ and $\eta=t^2$, we obtain
\[\begin{split}-U'(\rho)&=\frac{(1 + 3 \rho^2) (\rho^2 - t^2) (t^2-1 )
-
 2 \rho^2 (1 - t^2)^2 \log \rho - (6 \rho^4 - 2 t^4 - 2 \rho^2 (1 + t^4))
 \log
   t }{4 \rho^2 (-1 + t^2) (1 - t^2 + (1 + t^2) \log t)}\\&=\frac{(1 + 3 \kappa) (\kappa - s) (-1 + \eta) -
 \kappa (-1 + \eta)^2 \log \kappa + (\kappa - 3 \kappa^2 + \eta^2 + \kappa \eta^2) \log \eta}{(2 \kappa (-1 +
   \eta) (2 - 2 \eta + (1 + \eta) \log \eta)}.\end{split}\]
Since
$$\eta=\frac{\kappa(3\kappa-1)}{1+\kappa} ~\mbox{ and }~ { (2 - 2\eta + (1 + \eta) \log \eta)}>0 ~\mbox{ for }~ \eta>1,
$$
we have to prove that
\begin{equation*}\label{ara}
L(\kappa):=(1 + 3 \kappa) (\kappa - s) (-1 + \eta) -
 \kappa (-1 + \eta)^2 \log \kappa + (\kappa - 3 \kappa^2 + \eta^2 + \kappa \eta^2) \log \eta>0.
\end{equation*}
Then $$L(\kappa)=\frac{\kappa (-1 - 2 \kappa + 3
\kappa^2)}{(1+\kappa)^2}K(\kappa)$$ where
$$K(\kappa)=
-2 - 4 \kappa +
   6 \kappa^2 + (-1 - 2 \kappa + 3 \kappa^2) \log \kappa + (1 - 2 \kappa - 3 \kappa^2) \log\left(
     \frac{\kappa (-1 + 3 \kappa)}{1 + \kappa}\right).
$$
Further
$$K'''(\kappa)=\frac{4 (1 - 4 \kappa + 14 \kappa^2 + 12
\kappa^3 + 9 \kappa^4)}{\kappa^2 (-1 + 2 \kappa + 3 \kappa^2)^2}>0.
$$
Moreover
$$K''(1)= 0,\ \ \ K'(1)= 0,\ \ \ K(1)= 0
$$
and therefore
$$K(\kappa)>0.
$$
Since $r\to -A'(r)/B'(r)$ is increasing and
$$\rho=\sqrt{\frac 16 + \frac{t^2}{6} + \frac{1}{6} \sqrt{1 + 14 t^2 + t^4}}
> \tau=\frac{1 + t}{2},
$$
by Lemma~\ref{xyz} and \eqref{us}, we obtain
$$\frac{-A'(\rho)}{B'(\rho)}>\frac{-A'(\tau)}{B'(\tau)}>1.
$$
\end{proof}
\section{The main results}
As a direct corollary of Lemma~\ref{abuv} we obtain
\begin{theorem}
If $f(re^{i\theta})=h(r)e^{i\theta}$, $h(1)=1$, $h(t)=s$, $h'(1)=0$,
$h'(t)=0$,  is a radial homogeneous  bi-Harmonic diffeomorphism of
the annulus $A(1,t)$ onto the annulus $A(1,s)$, then
$$
s\ge \sigma_0(t):=\frac{t (3 - 4 t^2 + t^4 + 4 t^2 \log t)}{2 - 2
t^2 + \log t + 3 t^4 \log t}.
$$ The condition is sufficient as well.
The critical homogeneous bi-harmonic mapping is
$$f(z)=h_0(r)e^{i\theta},\ \  z=r e^{i\theta}
$$
where
\[\begin{split}h_0(r)&=\frac{(1-t^2) (3 t^2 + 3 (3 - t^2) r^2 - r^4)}{{4 r (2 - 2 t^2 + \log t + 3 t^4
\log t)}} \\
& \hspace{.5cm} + \frac{(6 t^4 + 6 (1 + t^4) r^2 - 2 r^4) \log t + 6
(1 - t^2)^2 r^2 \log r}{{4 r (2 - 2 t^2 + \log t + 3 t^4 \log t)}}.
\end{split}\]
 The function $\sigma_0(t)$ is
smaller than the corresponding function $n(t)$ for harmonic
mappings. See Figure~{\rm 2}.
\end{theorem}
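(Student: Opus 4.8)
The plan is to reduce the existence of a radial homogeneous bi-harmonic diffeomorphism between the two annuli to a single pointwise inequality for the ratio $-A'(r)/B'(r)$ and then to quote Lemma~\ref{lem-3.3}; in effect the theorem is a corollary of Lemmas~\ref{abuv} and \ref{lem-3.3}. First I would invoke Lemma~\ref{abuv} with $x=y=0$: every radial homogeneous bi-harmonic mapping $f(re^{i\theta})=h(r)e^{i\theta}$ with $h(1)=1$, $h(t)=s$ must be of the form $h(r)=A(r)+B(r)s$, and such $h$ automatically satisfies $h(1)=1$, $h(t)=s$, $h'(1)=h'(t)=0$. Next I would note that the Jacobian of such an $f$ at $z=re^{i\theta}$ is $h(r)h'(r)/r$; since $h\ge 1>0$ on $[1,t]$, the map $f$ is an orientation-preserving diffeomorphism of the open annulus $A(1,t)$ onto $A(1,s)$ if and only if $h'(r)>0$ for every $r\in(1,t)$, and then $h$, being a strictly increasing bijection of $[1,t]$ onto $[1,s]$, does carry $A(1,t)$ onto $A(1,s)$. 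Because $B'(r)>0$ on $(1,t)$ by \eqref{bplus}, the condition $h'(r)=A'(r)+B'(r)s>0$ rewrites as
$$s>\frac{-A'(r)}{B'(r)},\qquad 1<r<t .$$

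For the necessity I would let $r\to t^-$ in this inequality and apply \eqref{do}, obtaining $s\ge\lim_{r\to t^-}(-A'(r)/B'(r))=\sigma_0(t)$. For the sufficiency I would use the monotonicity \eqref{us}: since $r\mapsto -A'(r)/B'(r)$ is strictly increasing on $(1,t)$ with limit $\sigma_0(t)$ at $t^-$, it stays strictly below $\sigma_0(t)$ there, so for any $s\ge\sigma_0(t)$ the function $h=A+Bs$ has $h'>0$ on $(1,t)$ and provides the desired diffeomorphism; hence the class is nonempty precisely for $s\ge\sigma_0(t)$. To see that $A(1,\sigma_0(t))$ is a genuine annulus I would combine the same monotonicity with Lemma~\ref{xyz}, which gives $\sigma_0(t)>-A'(\tau)/B'(\tau)>1$ for $\tau=(1+t)/2$.

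The remaining items are routine. Substituting $s=\sigma_0(t)$ into $h=A+Bs$ with the closed-form expressions for $A$ and $B$ from Lemma~\ref{abuv} and simplifying (a long but mechanical computation) yields the stated critical map $h_0$. The comparison $\sigma_0(t)<n(t)=(1+t^2)/(2t)$ for $t>1$ is an elementary one-variable estimate: after clearing denominators it reduces to the positivity of an explicit combination of polynomials in $t$ and $\log t$ vanishing to high order at $t=1$, which is checked by repeated differentiation exactly as in the proof of Lemma~\ref{xyz} (or simply displayed in Figure~2). I do not expect a genuine obstacle in the theorem itself; the only point requiring care is that the boundary limit of the indeterminate quotient $-A'(r)/B'(r)$ (note $A'(t^-)=B'(t^-)=0$ by \eqref{apa}) is exactly $\sigma_0(t)$, and that, thanks to the monotonicity \eqref{us}, this limiting value is the true threshold rather than merely a lower bound along one approach.
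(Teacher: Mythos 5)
Your proposal is correct and follows essentially the paper's intended route: the theorem is exactly the specialization $x=y=0$ of Lemma~\ref{abuv}, so $h=A+Bs$, and the condition $h'\ge 0$ becomes $s\ge -A'(r)/B'(r)$, whose supremum over $(1,t)$ is the boundary limit \eqref{do} by the monotonicity \eqref{us} and positivity \eqref{bplus}, with Lemma~\ref{xyz} giving $\sigma_0(t)>1$. The paper states this as a direct corollary without writing out the details, and your write-up supplies precisely those details.
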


\begin{theorem}[The main theorem]
Let $t>1$ and $s>1$. If $f(re^{i\theta})=g(r)e^{i\theta}$ is a
radial bi-Harmonic diffeomorphism of the annulus $A(1,t)$ onto the
annulus $A(1,s)$, mapping the inner boundary onto the inner
boundary, then $s\ge \sigma(t)$ where the constant
$$\sigma(t)=\inf_{x\ge 0,y\ge 0}\sup_{1\le r \le t}\left\{\frac{-A'(r)}{B'(r)}+x
\frac{-U'(r)}{B'(r)}+y\frac{-V'(r)}{B'(r)}\right\}
$$
is bigger than $1$ and smaller than $\sigma_0(t)$. The condition is
sufficient as well. Moreover there exists a critical mapping
$f(re^{it})=g_0(r)e^{it}$, between annuli $A(1,t)$ and
$A(1,\sigma(t))$ and it satisfies the conditions $g'_0(1)>0$ and
$g'_0(t)>0$.
\end{theorem}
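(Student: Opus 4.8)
The plan is to reduce everything to the single‑variable functions $a(r)=-A'(r)/B'(r)$, $u(r)=-U'(r)/B'(r)$, $v(r)=-V'(r)/B'(r)$ from Section~3 and to a convex‑optimization problem in the boundary speeds $x=g'(1)$ and $y=g'(t)$. For the necessity: if $f(re^{i\theta})=g(r)e^{i\theta}\colon A(1,t)\to A(1,s)$ is a radial bi‑harmonic diffeomorphism sending the inner boundary to the inner boundary, then $g$ is of the form \eqref{gq} with $g(1)=1$, $g(t)=s$ and $g'>0$ on $(1,t)$, so the one‑sided derivatives satisfy $x:=g'(1)\ge0$ and $y:=g'(t)\ge0$; Lemma~\ref{abuv} (whose formulas persist for $x,y\ge0$) gives $g=A+Bs+Ux+Vy$, hence $g'(r)=B'(r)\bigl(s-\phi_r(x,y)\bigr)$ with $\phi_r(x,y):=a(r)+xu(r)+yv(r)$. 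Since $B'(r)>0$ on $(1,t)$ by \eqref{bplus}, positivity of $g'$ forces $s\ge\phi_r(x,y)$ for all $r\in[1,t]$, and therefore $s\ge\Phi(x,y):=\sup_{1\le r\le t}\phi_r(x,y)\ge\inf_{x,y\ge0}\Phi(x,y)=\sigma(t)$.

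For the bounds $1<\sigma(t)<\sigma_0(t)$ I would, for the lower one, evaluate $\phi_r$ at the point $r=\rho$ of the last lemma of Section~3, where $u(\rho)=v(\rho)>0$ and $a(\rho)>1$ by \eqref{uvb}: then $\phi_\rho(x,y)=a(\rho)+(x+y)u(\rho)\ge a(\rho)>1$ for all $x,y\ge0$, whence $\sigma(t)\ge a(\rho)>1$. For the upper one, $a$ is increasing by \eqref{us} with $\lim_{r\to t^-}a(r)=\sigma_0(t)$ by \eqref{do}, so $\Phi(0,0)=\sup_r a(r)=\sigma_0(t)$; taking $x=0$ and a small $y>0$, on a one‑sided neighbourhood of $t$ one has $v<0$ by \eqref{mi}, hence $a+yv<a<\sigma_0(t)$ there, while on the complementary interval the decreasing function $v$ stays bounded (finite limit \eqref{la} at $1^+$), so $a+yv<\sigma_0(t)$ there too once $y$ is small; thus $\sigma(t)\le\Phi(0,y)<\sigma_0(t)$.

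For sufficiency: if $s>\sigma(t)$, pick $(x,y)\in[0,\infty)^2$ with $\Phi(x,y)<s$ and set $g=A+Bs+Ux+Vy$; then $g(1)=1$, $g(t)=s$, $g'(1)=x\ge0$, $g'(t)=y\ge0$, and $g'(r)=B'(r)(s-\phi_r(x,y))\ge B'(r)(s-\Phi(x,y))>0$ on $(1,t)$ by \eqref{bplus}, so $f(re^{i\theta})=g(r)e^{i\theta}$ is a radial bi‑harmonic diffeomorphism of $A(1,t)$ onto $A(1,s)$ carrying inner boundary to inner boundary. For the critical value $s=\sigma(t)$ I would first produce a minimizer of $\Phi$: as a supremum of affine functions of $(x,y)$, $\Phi$ is convex and lower semicontinuous, and it is coercive on $[0,\infty)^2$ because $\Phi(x,y)\ge\phi_\rho(x,y)=a(\rho)+(x+y)u(\rho)$ with $u(\rho)>0$; hence the infimum is attained at some $(x_0,y_0)\in[0,\infty)^2$, and $g_0=A+B\sigma(t)+Ux_0+Vy_0$ satisfies $g_0(1)=1$, $g_0(t)=\sigma(t)$, $g_0'(1)=x_0$, $g_0'(t)=y_0$, $g_0'=B'(\sigma(t)-\phi_r(x_0,y_0))\ge0$, and is the announced critical mapping.

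The last and hardest point is the strict positivity $x_0>0$, $y_0>0$ (equivalently $g_0'(1)>0$, $g_0'(t)>0$). That $y_0>0$ is easy: on the edge $y=0$, \eqref{re} gives $\lim_{r\to t^-}(a(r)+xu(r))=\sigma_0(t)+x\lim_{r\to t^-}u(r)\ge\sigma_0(t)$ since $\lim_{r\to t^-}u(r)>0$, so $\Phi(x,0)\ge\sigma_0(t)>\sigma(t)$ for every $x\ge0$ and no minimizer lies on $\{y=0\}$. For $x_0$ I expect to argue by contradiction: if $x_0=0$, the optimality condition for the convex map $y\mapsto\Phi(0,y)$ pins down the maximizing set $R^*=\{r:\phi_r(0,y_0)=\sigma(t)\}$, and combining the blow‑up $\lim_{r\to1^+}u(r)=-\infty$ of \eqref{so} with the signs $\lim_{r\to t^-}u(r)>0$, $\lim_{r\to t^-}v(r)=-\infty$ of \eqref{re}--\eqref{mi} and the monotonicity of $u$, $v$ in \eqref{lus}--\eqref{luso}, one should be able to exhibit a direction $(1,c)$ into the open quadrant along which $\Phi$ strictly decreases, contradicting minimality. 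Making this last step rigorous---in particular excluding the degenerate possibility that $R^*$ reduces to the single zero of $v$---is precisely where the sharp estimates of Lemma~\ref{lem-3.3} and the $\rho$‑lemma enter, and it is the main obstacle; once it is settled, $g_0'(1)=x_0>0$ and $g_0'(t)=y_0>0$ finish the proof.
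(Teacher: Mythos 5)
Your overall strategy coincides with the paper's: the reduction $g=A+Bs+Ux+Vy$ from Lemma~\ref{abuv}, so that $g'(r)=B'(r)\bigl(s-\phi_r(x,y)\bigr)$ with $B'>0$ by \eqref{bplus}; the evaluation at $\rho$, where $-U'(\rho)=-V'(\rho)>0$ and $-A'(\rho)/B'(\rho)>1$ by \eqref{uvb}, to get both $\sigma(t)>1$ and the coercivity/boundedness in $(x,y)$; a small-perturbation argument for $\sigma(t)<\sigma_0(t)$ (the paper perturbs along the diagonal $x=y$ using $p=-(U'+V')/B'<0$ near both endpoints, you use $(0,y)$ with $-V'/B'<0$ near $t$ and bounded near $1$ by \eqref{mi} and \eqref{la}; both work at the same level of rigor); and a compactness step producing the critical map, where your convexity--coercivity formulation is actually tidier than the paper's minimizing-sequence argument. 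Your exclusion of $y_0=0$, via $\lim_{r\to t^-}(-A'/B')=\sigma_0(t)$ and $\lim_{r\to t^-}(-U'/B')>0$ from \eqref{do} and \eqref{re}, is precisely the mechanism the paper invokes.

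The genuine gap is the one you flag yourself: the claim $x_0=g_0'(1)>0$, which is part of the statement. Your proposal only sketches a hoped-for contradiction (a descent direction $(1,c)$ when $x_0=0$) without carrying it out, and the delicate case is exactly the one you identify: if the maximizing set of $r\mapsto -A'(r)/B'(r)+y_0\bigl(-V'(r)/B'(r)\bigr)$ contains points where $-U'/B'\ge 0$, then simply increasing $x$ does not lower $\Phi$, and one has to trade $x$ against $y$ using the quantitative facts of Lemma~\ref{lem-3.3} (monotonicity \eqref{lus}, \eqref{luso}, the blow-ups \eqref{so}, \eqref{mi}) and the Appendix observation that $-U'/B'$ is already positive at the zero of $-V'/B'$. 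The paper settles this point (very tersely) by combining $\sigma<\sigma_0$ with the endpoint data $A'(t^-)=B'(t^-)=0$, $U'(t^-)=0$, $V'(t^-)=1$ and $(-U'/B')(t^-)>0$ to conclude $x_0>0$, $y_0>0$ and $r_0<t$; your write-up matches the paper everywhere else, but until this last step is actually proved, the assertion $g_0'(1)>0$ --- and hence the full "moreover" clause of the theorem --- remains open in your proposal.
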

\begin{proof} Under the conditions of the theorem $g$ is non-decreasing.
 Then $$g'(r)\ge 0, \ \ \text{ for }  \ \ 1\le r\le t,$$ if and only if $$A'(r)+B'(r)s + U'(r)x+ V'(r) y\ge
 0\text{ for }  \ \ 1\le r\le t.
$$
Here $x=g'(1)$ and $y=g'(t)$. If
$$X_n(t) :=\frac{-A'(r_n)}{B'(r_n)}+x_n \frac{-U'(r_n)}{B'(r_n)}+y_n\frac{-V'(r_n)}{B'(r_n)}\to \sigma(t)
$$
then by \eqref{uvb}
$$X_n\ge \frac{-A'(\rho)}{B'(\rho)}+x_n \frac{-U'(\rho)}{B'(\rho)}+y_n\frac{-V'(\rho)}{B'(\rho)}>
\frac{-A'(\rho)}{B'(\rho)}>1.
$$
It follows that $\sigma(t)>1$ and that the sequences $x_n$ and $y_n$
stay bounded when $n\to \infty$. On the other hand, it follows from
\eqref{do}-\eqref{so}
that there exist $1<\tau_1(t)<\tau_2(t)<t$ such that the function
$$p(r)=-\frac{U'(r)+V'(r)}{B'(r)}
$$
is negative in intervals $[1,\tau_1]$ and $[\tau_2,t]$. From
\eqref{us}, the maximum of $\frac{-A'(r)}{B'(r)}$ is
$$\frac{-A'(t)}{B'(t)}:=\frac{-A'}{B'}(t^-)
$$
defined in \eqref{do}.  Thus there exists a small enough $x>0$ such
that
$$\frac{-A'(t)}{B'(t)}> \frac{-A'(r)}{B'(r)}+xp(r)
$$
for all $r:1<r<t$ and fixed $t$. This means that
$$\sigma(t)<\sigma_0(t).
$$
Assume without loos of generality that $x_n \to x_0$ and $y_n \to
y_0$ and $r_n\to r_0$. The sequence $g_n$ is monotonic and converges
to a strictly monotonic function $g_0.$ The resulting bi-harmonic
mapping is critical. Since $\sigma<\sigma_0$, because
$A'(t^-)=B'(t^-)=0$, $U'(t^-)=0$, $V'(t^-)=1$ and $(-U'/B')(t^-)>0$
it follows that $x_0>0$, $y_0>0$ and $r_0<t$.
\end{proof}

\begin{figure}[htp]\label{poincare1}
\centering
\includegraphics{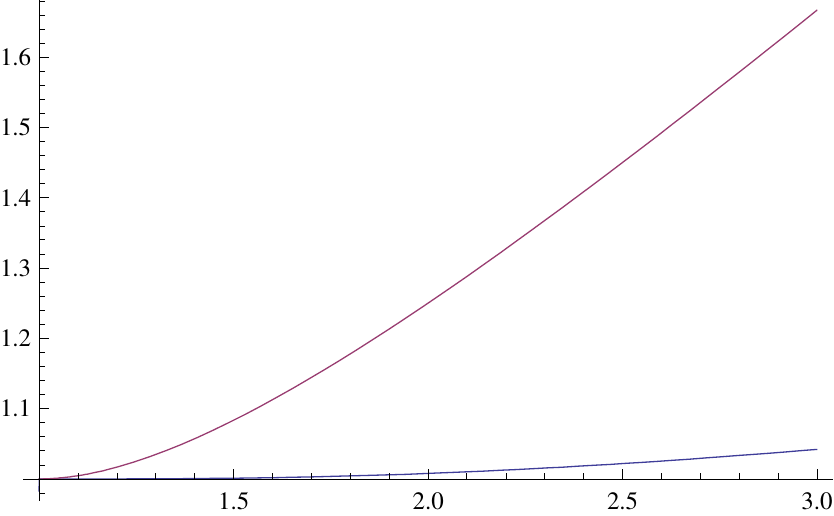}
\caption{The curve above (below) corresponds to the critical
harmonic (bi-harmonic) mappings between annuli $A(1,t)$, and
$A(1,\omega(t))$, where $\omega(t)=\sigma_0(t)$ and $\omega(t)=n(t)$
respectively ($1<t\le 3$).}
\end{figure}

\begin{remark}
\begin{enumerate}
\item[\textbf{(a)}] For given $t>1$ do there exists some
constant $s(t)>1$ such that, the class of bi-harmonic
diffeomorphisms between annuli $A(1,t)$ and $A(1,s(t))$ is empty?
\item[\textbf{(b)}] If the answer to the question a) is affirmative, does
$s(t)=\sigma(t)$? In \cite{jmaa} the first author proved that, if
$\Delta u$ is small enough, then the answer of question \textbf{(a)}
is affirmative.
\end{enumerate}
\end{remark}

\end{document}